\newtheorem{proposition}{Proposition}
\newtheorem{conjecture}{Conjecture}
\def\Q{\mathbb Q}
\def\Schub{\mathfrak S}
\title{A Markov chain on the symmetric group which is Schubert positive?}
\author{Thomas Lam}
\address{Department of Mathematics, University of Michigan, Ann Arbor, MI
  48109 USA}
\email{tfylam@umich.edu}
 \thanks{TL was supported by NSF grant DMS-0901111, and by a Sloan Fellowship.}
\author{Lauren Williams}
\address{Department of Mathematics, University of California, Berkeley, CA
94705 USA}
\email{williams@math.berkeley.edu}
\thanks{LW was supported by NSF grant DMS-0854432, and by a Sloan Fellowship.}
\begin{document}
\begin{abstract}
We study a multivariate Markov chain on the symmetric group with remarkable enumerative 
properties.  We conjecture that the stationary distribution of this Markov chain can be expressed 
in terms of positive sums of Schubert polynomials.  
This Markov chain is a multivariate 
generalization of a Markov chain introduced by the first author 
in the study of random affine Weyl group elements.
\end{abstract}
\maketitle

\section{A Markov chain on the symmetric group}
Let $S_n$, $n \geq 3$ denote the symmetric group on $n$ letters and let $(i, j)$ denote the transposition which swaps $i$ and $j$.  We use conventions so that left multiplication acts on values and right multiplcation acts on positions.

Define a matrix $P = (p_{w,v})_{w,v \in S_n}$
$$
p_{w,v} = \begin{cases}
x_{w^{-1}(i+1)}& \mbox{if $v = (i ,i+1)w < w$.}\\
x_{w^{-1}(1)}& \mbox{if $v = (1,n) w > w$.} \\
* & \mbox{if $w = v$.} \\
0 & \mbox{otherwise.}
\end{cases}
$$
where $*$ is chosen so that $\sum_{v \in S_n} p_{w,v} = 1$ for each $w \in S_n$.  If the $x_i$'s are nonnegative real numbers summing to at most 1, then we can think of $P$ as defining a Markov chain on $S_n$.  When we set $x_i = 1/n$, we obtain the Markov chain defined in \cite[Section 3]{L}.  

\begin{proposition}\label{P:null}
The matrix $P^T - I$ has a one-dimensional nullspace for generic values of $x$.
In particular, when the $x_i$'s are nonnegative real numbers summing to at most $1$,
the Markov chain defined by $P$ has a unique stationary distribution.
\end{proposition}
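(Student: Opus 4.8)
The plan is to deduce both assertions from a single structural fact about the directed transition graph, combined with two standard facts: that the rank of a matrix with polynomial entries is maximized on a Zariski-dense open set, and that for a finite stochastic matrix the dimension of the $1$-eigenspace of $P^T$ equals the number of recurrent (closed) communicating classes. First I would record the trivial upper bound: since each row of $P$ sums to $1$ identically in the $x_i$, the all-ones vector lies in the kernel of $P-I$, so $\operatorname{rank}(P^T-I)=\operatorname{rank}(P-I)\le |S_n|-1$ for every value of $x$. The entries of $P^T-I$ are affine-linear in $x_1,\dots,x_n$, so this rank is lower semicontinuous; hence the generic nullspace has dimension exactly $1$ as soon as I exhibit a single point where the rank equals $|S_n|-1$. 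By the eigenvalue-multiplicity fact it therefore suffices to produce one value of $x$ at which the stochastic matrix has a unique recurrent class, and this step simultaneously sets up the second assertion.

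Next I would prove the key combinatorial lemma: whenever all $x_i>0$, every state has a directed path to the identity $e$ using only the length-decreasing moves $v=(i,i+1)w<w$. The point is that any $w\ne e$ has some $i$ for which the value $i+1$ precedes the value $i$ in one-line notation (otherwise $w=e$), and with the stated conventions (left multiplication acting on values) this is exactly the condition $(i,i+1)w<w$. Such a move is available with positive probability $x_{w^{-1}(i+1)}>0$ and strictly decreases the Coxeter length; iterating, the length strictly decreases until we arrive at $e$.

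Finally I would assemble the pieces. Since every vertex reaches $e$, the strongly connected component of $e$ is a sink in the acyclic condensation: an outgoing edge to another component $D$ would combine with a path from $D$ back to $e$ to create a directed cycle among components, which is impossible. For the same reason any sink component must coincide with that of $e$, so the chain has a unique recurrent class. This yields a unique stationary distribution and forces $\operatorname{rank}(P^T-I)=|S_n|-1$ at every point with positive coordinates; by the semicontinuity observation above, the generic rank is then $|S_n|-1$, proving the first assertion. As a concrete specialization one may also take $x_i=1/n$, recovering the chain of \cite{L}.

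The main obstacle is the honest scope of the lemma rather than any hard computation: the reachability argument genuinely uses positivity of the relevant $x_i$, so care is needed at the boundary where some $x_i$ vanish, where one should either specialize the nonvanishing $(|S_n|-1)\times(|S_n|-1)$ minor produced above or pass to a limit. I would also emphasize the conceptual point that one must argue via the count of recurrent classes and the "every state reaches $e$'' property, rather than attempt to establish full irreducibility (strong connectivity in both directions), which is stronger than what the statement requires.
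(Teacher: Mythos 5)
Your proof is correct, and its overall skeleton---verify one-dimensionality at a single well-chosen specialization, then propagate to generic $x$---matches the paper's. But the way you obtain the specialization is genuinely different and more self-contained. The paper simply cites \cite[Proposition 1]{L} to assert that the chain is irreducible and aperiodic when all $x_i>0$ sum to at most $1$, and then remarks in one line that a basis of the nullspace is rational in the $x_i$, so the generic nullspace must be one-dimensional. You instead prove the needed input from scratch: your descent lemma (for $w\neq e$ some value $i+1$ precedes $i$, so the move $w\mapsto (i,i+1)w$ has probability $x_{w^{-1}(i+1)}>0$ and strictly decreases length) shows every state reaches the identity, hence there is exactly one recurrent class, hence $\dim\ker(P^T-I)=1$ by the standard count of recurrent classes of a finite stochastic matrix. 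This buys several things: you never need the ascent moves $(1,n)w$ at all, you dispense with aperiodicity (which is irrelevant to uniqueness of the stationary distribution), and you correctly isolate the minimal structural fact required---one recurrent class rather than full irreducibility. Your genericity step (rank is lower semicontinuous in the parameters, bounded above by $|S_n|-1$ since $P$ fixes the all-ones vector identically, and attains that bound at a positive point) is also more explicit than the paper's rational-function remark, though it amounts to the same mechanism. What the paper's route buys in exchange is brevity, and---via irreducibility and aperiodicity---actual convergence to the stationary distribution, which is stronger than uniqueness but not needed for the proposition.

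One further point: your caution about the boundary is warranted, and in fact no argument can remove it. At degenerate nonnegative points such as $x_1=\cdots=x_{n-1}=0$ one has $P=I$ and every distribution is stationary (already for $n=3$, taking $x_1=0$ and $x_2>0$ makes both $123$ and $231$ absorbing), so the ``in particular'' clause is only true, and is only proved by the paper as well, under strict positivity of the $x_i$. Your suggested repairs (specializing a nonvanishing minor, or passing to a limit) cannot succeed at such points; the honest resolution is that the hypothesis should be read as positivity, exactly the scope of your reachability lemma.
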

\begin{proof}
When all $x_i$ are positive and sum to at most $1$, then it follows from  \cite[Proposition 1]{L} that we have an irreducible and aperiodic Markov chain on $S_n$, and thus we have a unique invariant distribution.  If we treat $x_1,\ldots,x_{n-2}, x_{n-1}$ as variables, then a basis of the nullspace of $P^T-I$ can be written as a rational function in the $x_i$.  This nullspace must be one-dimensional.
\end{proof}

Let $\{\zeta(w) \in \Q(x_1,x_2,\ldots,x_{n-1}) \mid w \in W\}$ denote a vector spanning the nullspace of Proposition \ref{P:null}, which we normalize by setting
$$
\zeta(w_0) =x_1^{1+2+\cdots + n-2}x_2^{1+2 + \cdots + n-3} \cdots x_{n-2}.
$$

Suppose $w = w_1 w_2 \cdots w_n \in S_n$.  Let $\chi(w) = (w_1+1) (w_2+1) \cdots (w_n+1) \in S_n$ be the cyclic shift of $w$, where the letters of $\chi(w)$ are interpreted modulo $n$.  The following follows immediately from the definitions.

\begin{proposition}\label{P:shift}
For each $w \in W$, we have $\zeta(\chi(w)) = \zeta(w)$.
\end{proposition}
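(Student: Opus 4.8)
The plan is to show that the Markov chain $P$ possesses a cyclic symmetry, namely that $p_{\chi(w),\chi(v)} = p_{w,v}$ for all $w,v \in S_n$, and then to deduce the invariance of $\zeta$ from the uniqueness in Proposition \ref{P:null}. Observe first that, under the convention that left multiplication acts on values, $\chi$ is exactly left multiplication by the long cycle $c = (1,2,\ldots,n)$, i.e. $\chi(w) = cw$. Since $c$ is a group element, if $v = tw$ for a transposition $t$, then $\chi(v) = cv = (ctc^{-1})(cw) = (ctc^{-1})\chi(w)$; thus the edge $w \to v$ labeled by $t$ is carried to the edge $\chi(w)\to\chi(v)$ labeled by the conjugate $ctc^{-1}$.

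Next I would record how conjugation by $c$ acts on the set of transpositions that label the moves of $P$, namely $T = \{(1,2),(2,3),\ldots,(n-1,n),(1,n)\}$. A direct computation gives $c(i,i+1)c^{-1} = (i+1,i+2)$ for $1 \le i \le n-2$, together with $c(n-1,n)c^{-1} = (1,n)$ and $c(1,n)c^{-1} = (1,2)$; hence conjugation by $c$ cyclically permutes $T$ as the $n$-cycle $(1,2)\to(2,3)\to\cdots\to(n-1,n)\to(1,n)\to(1,2)$. With this in hand the verification of $p_{\chi(w),\chi(v)} = p_{w,v}$ splits into three cases according to the type of the labeling transposition $t$ and its image $ctc^{-1}$: (A) $t=(i,i+1)$ with $1\le i \le n-2$, where $t$ and $ctc^{-1}$ are both adjacent; (B) $t=(n-1,n)$, an adjacent transposition whose image is the special transposition $(1,n)$; and (C) $t=(1,n)$, the special transposition whose image is adjacent. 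In each case one must check two things: that the length (direction) condition defining the move is preserved, and that the attached weight $x_j$ is unchanged. In case (A) both checks are immediate from the identities $(cw)^{-1}(i+1) = w^{-1}(i)$ and $(cw)^{-1}(i+2) = w^{-1}(i+1)$, which show that the decreasing condition $w^{-1}(i) > w^{-1}(i+1)$ is preserved and that the weight $x_{w^{-1}(i+1)} = x_{(cw)^{-1}(i+2)}$ is unchanged.

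The heart of the matter --- and the step I expect to be the main obstacle --- is cases (B) and (C), where the move crosses the ``seam'' between the adjacent transpositions and the special transposition $(1,n)$. Here the two conventions differ: adjacent moves are length-decreasing while the $(1,n)$ move is length-increasing, and because left multiplication by $c$ does not preserve length, it is not a priori clear that a decreasing $(n-1,n)$-move is sent to an increasing $(1,n)$-move. The plan is to verify this by an explicit inversion count: writing $P = w^{-1}(n)$ and $Q = w^{-1}(n-1)$, the condition $(n-1,n)w < w$ is $P < Q$, and in $cw$ the value $1$ sits at position $P$ while the value $n$ sits at position $Q$; a direct computation shows that applying $(1,n)$ to $cw$ changes the number of inversions by $2(Q-P)-1 > 0$, so indeed $(1,n)(cw) > cw$, confirming the direction. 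The weights then match because $(cw)^{-1}(1) = w^{-1}(n)$, so both $p_{w,v}$ and $p_{\chi(w),\chi(v)}$ equal $x_{w^{-1}(n)}$; case (C) is the same computation read in reverse, using $(cw)^{-1}(2) = w^{-1}(1)$. Finally, since $P$ vanishes off these moves and the diagonal entries are forced by the row sums, the symmetry $p_{\chi(w),\chi(v)} = p_{w,v}$ holds for all $w,v$.

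To finish, I would feed this symmetry into uniqueness. The identity $p_{\chi(w),\chi(v)} = p_{w,v}$ says that $P$ commutes with relabeling by $\chi$; reindexing the stationarity relation $\sum_w \zeta(w)\, p_{w,u} = \zeta(u)$ by the substitution $w \mapsto \chi^{-1}(w)$ shows that the function $w \mapsto \zeta(\chi(w))$ again lies in the nullspace of $P^T - I$. By Proposition \ref{P:null} this nullspace is one-dimensional, so $\zeta \circ \chi = \lambda\, \zeta$ for some $\lambda \in \Q(x_1,\ldots,x_{n-1})$. Iterating and using $\chi^n = \mathrm{id}$ gives $\lambda^n = 1$, and since for positive $x_i$ the vector $\zeta$ has strictly positive entries (it is proportional to the genuine stationary distribution) we have $\lambda > 0$; hence $\lambda = 1$ and $\zeta(\chi(w)) = \zeta(w)$ for all $w$, as claimed.
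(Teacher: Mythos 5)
Your proof is correct and is exactly the argument the paper leaves implicit: the paper offers no proof beyond asserting the proposition ``follows immediately from the definitions,'' and what it means is precisely your observation that the transition matrix has the cyclic symmetry $p_{\chi(w),\chi(v)} = p_{w,v}$ (with $\chi$ being left multiplication by the long cycle), combined with the one-dimensionality of the nullspace from Proposition~\ref{P:null}. Your extra step ruling out a nontrivial scalar $\lambda$ (via $\lambda^n = 1$ and positivity --- needed because $\chi$ does not fix $w_0$, so the normalization alone does not force $\lambda = 1$) is a genuine detail that the paper glosses over, and it is handled correctly.
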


\section{Schubert polynomials}
We fix notations concerning Schubert polynomials.  Let $\partial_i$ denote the divided difference operator on polynomials in $x_1,x_2,\ldots$, defined by 
$$
\partial_i f(x_1,x_2,\ldots) = \frac{f(x_1,\ldots,x_i,x_{i+1},\ldots) - f(x_1,\ldots,x_{i+1},x_{i},\ldots)}{x_i - x_{i+1}}.
$$
For the longest permutation 
$w_0 \in S_n$, we first define $$\Schub_{w_0}(x_1,x_2,\ldots) := x_1^{n-1}x_2^{n-2} \cdots x_{n-1}.$$  Next for $w \in S_n$, we let $w^{-1}w_0 = s_{i_1}s_{i_2} \cdots s_{i_\ell}$ be a reduced expression.  Then
$$
\Schub_w := \partial_{i_1} \partial_{i_2} \cdots \partial_{i_\ell} \Schub_{w_0}.
$$
The polynomial $\Schub_w$ does not depend on the choice of reduced expression.  Furthermore, $\Schub_w$ does not depend on which symmetric group $w$ is considered an element of.

\section{Conjectures}
Our main conjecture is

\begin{conjecture}\label{C:main}
In increasing strength:
\begin{enumerate}
\item
Each $\zeta(w)$ is a polynomial.
\item
Each $\zeta(w)$ is a polynomial with nonnegative integer coefficients.
\item
Each $\zeta(w)$ is a nonnegative integral sum of Schubert polynomials $\Schub_u(x_1,x_2,\ldots)$.
\end{enumerate}
\end{conjecture}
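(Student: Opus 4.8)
The plan is to treat the defining relation $P^{T}\zeta=\zeta$ as an explicit linear recursion, bootstrap polynomiality from the monomial value at $w_0$, and only then upgrade to the two positivity statements. First I would put the stationary condition in balance form. Writing $p_{v,v}=1-\sum_{u\neq v}p_{v,u}$ and moving the diagonal term across, the equation at $v$ reads
\[
\Bigl(\sum_{u \neq v} p_{v,u}\Bigr)\,\zeta(v) \;=\; \sum_{w \neq v} p_{w,v}\,\zeta(w),
\]
i.e. outflow equals inflow. Substituting the weights, the left side is $\zeta(v)$ times $\sum_{i:\,(i,i+1)v<v} x_{v^{-1}(i+1)}$ plus $x_{v^{-1}(1)}$ when $(1,n)v>v$; the right side collects $x_{v^{-1}(i)}\,\zeta((i,i+1)v)$ over $i$ with $(i,i+1)v>v$, together with $x_{v^{-1}(n)}\,\zeta((1,n)v)$ when $(1,n)v<v$. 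Thus each equation couples $\zeta(v)$ only to its Bruhat neighbours and to its affine partner $(1,n)v$. I would also use Proposition \ref{P:shift} to pass to cyclic-orbit representatives and cut the size of the system.

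For part (1) I would solve this system by a single downward recursion seeded at $w_0$. The case $v=w_0$ is the model: every simple reflection lowers length, so all simple inflows vanish and the equation degenerates to
\[
(x_1+\cdots+x_{n-1})\,\zeta(w_0) \;=\; x_1\,\zeta((1,n)w_0),
\]
which determines $\zeta((1,n)w_0)$ as a genuine polynomial \emph{because} $\zeta(w_0)$ is divisible by $x_1$ (its $x_1$-exponent is $\binom{n-1}{2}\ge 1$). I would look for an ordering of $S_n$ — by affine length, or by the minimal number of $(1,n)$-ascents separating $v$ from $w_0$ — refined so that the equation at each $v$ isolates exactly one not-yet-computed value, turning the whole system into a forward substitution out of $\zeta(w_0)$. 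Polynomiality then reduces to showing that the division clears at every step; the mechanism visible at $w_0$, where divisibility by $x_1$ is forced, should propagate by tracking which variables must divide each $\zeta(v)$ and proving this by simultaneous induction along the ordering.

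Given polynomiality, for part (2) I would reinterpret $\zeta(w)$ combinatorially. Read as a sum–product of the nonnegative weights $x_i$, the recursion suggests that $\zeta(w)$ is a generating function over weighted directed walks in the chain from $w$ back to $w_0$, or over certain fillings in the style of the matrix-product ansatz for exclusion processes. If the forward substitution of part (1) never introduces a subtraction — which should follow once the ``one new unknown'' structure is in hand and the isolating variable always divides cleanly — then nonnegativity and integrality of the coefficients are immediate, so the content of (2) is exactly the absence of cancellation in the cleared recursion.

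The hard part will be part (3), Schubert positivity, which I expect to be the main obstacle, since monomial positivity does not imply Schubert positivity and a new input is needed. I see two routes. The divided-difference route starts from the observation that $\zeta(w_0)=\Schub_{u}$ is the single dominant Schubert polynomial whose code is the partition $\lambda=(\binom{n-1}{2},\binom{n-2}{2},\ldots,1)$; since $\partial_i$ sends each $\Schub_w$ to a $\Schub_{ws_i}$ or to $0$, it preserves nonnegative integral sums of Schubert polynomials, so it would suffice to realize every transition of the recursion as a product of such operators applied to $\zeta(w_0)$. The difficulty is that the balance equations do not visibly factor through $\partial_1,\ldots,\partial_{n-1}$, and reconciling the cyclic move $(1,n)$ with these non-cyclic operators is precisely where the affine symmetric group structure — with $(1,n)$ playing the role of the affine generator $s_0$ — must be exploited. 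The alternative geometric route would realize each $\zeta(w)$ as the cohomology class of an explicit subvariety of the flag variety, making Schubert positivity automatic by effectivity; producing such a variety from the Markov chain, or equivalently finding a positive pipe-dream/RC-graph rule for the Schubert coefficients, is the deepest and least routine step, and is where I would expect the essential new idea to be required.
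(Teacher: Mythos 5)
You have been asked to prove Conjecture \ref{C:main}, which is the paper's \emph{main open conjecture}: the paper itself offers no proof, only the supporting data for $n=3,4,5$ in Section 4. So there is no argument of the paper to compare yours against; the only question is whether your argument settles the conjecture, and it does not. It is a research plan whose decisive steps are stated as hopes (``I would look for an ordering,'' ``should propagate,'' ``should follow''), and the two mechanisms you do commit to would fail as stated. Your derivation of the balance equations and of the relation $(x_1+\cdots+x_{n-1})\,\zeta(w_0)=x_1\,\zeta((1,n)w_0)$ is correct (and consistent with the $n=3$ figure), but that is where the solid ground ends.

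Concretely: for part (1), your forward-substitution scheme rests on the claim that the equations can be ordered so that each one isolates exactly one new unknown. The equation at $w_0$ is misleadingly special, because $w_0$ has no covers above it, so only $\zeta(w_0)$ and $\zeta((1,n)w_0)$ appear. At a general $v$ the equation couples $\zeta(v)$, $\zeta((i,i+1)v)$ for every $i$ with $(i,i+1)v>v$, and $\zeta((1,n)v)$ when $(1,n)v<v$; already at the permutations of length $\binom{n}{2}-1$ each equation contains two unknowns (the coatom itself and its $(1,n)$-partner), and you exhibit no matching of equations to unknowns that is triangular under any order. Worse, even granting such an ordering, each solve divides either by the outflow coefficient $\sum_{i:(i,i+1)v<v}x_{v^{-1}(i+1)}+\cdots$, which is in general a \emph{sum} of variables rather than a monomial — so the divisibility mechanism you observed at $w_0$ does not propagate — or by a monomial after \emph{subtracting} the remaining inflow terms, so the ``absence of subtraction'' on which your part (2) rests is structurally false: stationarity balances inflow against outflow, and extracting one unknown from a balance equation necessarily subtracts the others. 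The $n=5$ table makes the difficulty visible: $\zeta(52341)$ is $a$ times a sum of six Schubert polynomials indexed by permutations of up to seven letters, which no chain of clean monomial divisions out of $\zeta(w_0)=a^6b^3c$ will transparently produce. Finally, for part (3) — the actual content of the conjecture, since monomial positivity does not imply Schubert positivity — you say yourself that an essential new idea is required. That assessment is accurate, and it is precisely why the statement is a conjecture rather than a theorem, both in the paper and after your proposal.
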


Let $\eta(w)$ denote the largest monomial that can be factored out of $\zeta(w)$.  By Proposition \ref{P:shift}, $\eta(w) = \eta(\chi(w))$.  Write $[m]$ to denote $\{0,1,2,\ldots,m\}$.
\begin{conjecture}[Monomial factor]
Assume Conjecture \ref{C:main}(1).  The map $w \mapsto \eta(w)$ is an $n$-to-$1$ map from $S_n$ to
$$
\left\{x_1^{a_1+a_2+\cdots + a_{n-2}}x_2^{a_2 + \cdots + a_{n-2}} \cdots x_{n-2}^{a_{n-2}} \mid (a_1,a_2,\ldots,a_{n-2}) \in [n-2] \times[n-3] \times \cdots \times [1]\right\}.
$$
Moreover, $\eta(w)=x_1^{a_1+a_2+\cdots + a_{n-2}}x_2^{a_2 + \cdots + a_{n-2}} \cdots x_{n-2}^{a_{n-2}}$ is given by 
$$
a_{i} = 
\#\{k \in [i+2,n] \mid w_k \in [w_i,w_{i+1}]\},$$ where $[w_i,w_{i+1}]$ denotes a cyclic subinterval of $[n]$.
\end{conjecture}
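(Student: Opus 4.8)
The plan is to compute, for each variable $x_i$, the exact power $m_i(w)$ to which $x_i$ divides $\zeta(w)$; granting Conjecture~\ref{C:main}(1) this makes $\eta(w)=\prod_i x_i^{m_i(w)}$, and $m_i(w)$ is simply the $x_i$-adic valuation $v_i(\zeta(w))$. Since valuations are homomorphisms on the multiplicative group of $\Q(x_1,\ldots,x_{n-1})$, I would first invoke the Markov chain tree theorem to write the unnormalized stationary weight $Z(w)=\sum_T\prod_{(u\to v)\in T}p_{u,v}$, summed over arborescences $T$ directed toward $w$ in the transition digraph (vertices $S_n$, edges the off-diagonal transitions of $P$), so that $\zeta(w)=\zeta(w_0)\,Z(w)/Z(w_0)$ by Proposition~\ref{P:null}. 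Because every $p_{u,v}$ is a single variable and all coefficients are positive, there is no cancellation, and $v_i(Z(w))=\mu_i(w)$, the minimum number of $x_i$-weighted edges in an arborescence rooted at $w$. Hence
\[ m_i(w)=v_i(\zeta(w_0))+\mu_i(w)-\mu_i(w_0)=\binom{n-i}{2}+\mu_i(w)-\mu_i(w_0), \]
and the problem reduces to evaluating $\mu_i(w)$ up to an additive constant.

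The first real step is to expose the cyclic structure of the $x_i$-weighted edges. Writing $c=u_i$ for the value in position $i$ of $u$ and $c^-=c-1$ (with $1^-=n$) for its cyclic predecessor, a direct check against the definition of $P$ shows that $u$ has \emph{at most one} out-edge of weight $x_i$: it exists precisely when $c^-$ occupies a position to the right of $i$, and it is the move swapping $c$ and $c^-$. When $c\ge 2$ this is a sorting edge, and when $c=1$ it is the wrap edge $w\mapsto(1,n)w$; the cyclic predecessor unifies the two cases, which is exactly why \emph{cyclic} intervals appear in the answer. Taking $x_i$-valuations in the stationarity identity $Z(v)(1-p_{v,v})=\sum_{u\to v}Z(u)p_{u,v}$ — again using positivity to turn the valuation of a sum into a minimum — yields the Bellman-type recurrence $\mu_i(v)+\epsilon_v=\min_{u\to v}\bigl(\mu_i(u)+[\,p_{u,v}=x_i\,]\bigr)$, where $\epsilon_v=v_i(1-p_{v,v})\in\{0,1\}$ records whether the only out-edge of $v$ is its $x_i$-edge. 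This recurrence is the engine for the lower bound.

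Next I would prove $\mu_i(w)=\sum_{j\ge i}a_j(w)+C_i$ for a constant $C_i$ independent of $w$, with $a_j(w)$ the cyclic-interval statistic in the statement. The upper bound comes from exhibiting one arborescence rooted at $w$ whose number of $x_i$-edges equals the claimed value: route every vertex toward $w$ through non-$x_i$ moves wherever possible and charge an $x_i$-edge only at the vertices genuinely forced to use one, the forced set being counted by the $a_j$. The matching lower bound comes from propagating the recurrence above, equivalently from a cut/dual certificate for the min-cost arborescence. \textbf{This min-max identification of $\mu_i(w)$ with the cyclic-interval count is the main obstacle}: one must simultaneously produce the optimal arborescence and a tight lower-bound certificate while correctly tracking how $\sum_{j\ge i}a_j$ changes under each of the two move types. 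The constant $C_i$ is then pinned down at $w_0$, where I have checked that $\sum_{j\ge i}a_j(w_0)=\binom{n-i}{2}$ agrees with $v_i(\zeta(w_0))$; combining this with the display above gives $m_i(w)=\sum_{j\ge i}a_j(w)$, i.e.\ the asserted formula for $\eta(w)$. That this formula is $\chi$-invariant is immediate — shifting all values by $1$ modulo $n$ preserves cyclic-interval membership — reconfirming Proposition~\ref{P:shift}.

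It remains to prove the map is $n$-to-$1$ onto the stated set. Each $a_j$, hence $\eta$, is $\chi$-invariant, and every $\chi$-orbit has size exactly $n$ (adding $k\not\equiv 0$ to all values changes the one-line notation), so there are $(n-1)!$ orbits; since the monomial is recovered from $(a_1,\ldots,a_{n-2})$ by reading off successive exponent differences, the target set $[n-2]\times\cdots\times[1]$ also has size $(n-1)!$. I would pick the unique representative with $w_1=1$ in each orbit and show that $w\mapsto(a_1(w),\ldots,a_{n-2}(w))$ restricts to a bijection from these $(n-1)!$ representatives onto $[n-2]\times\cdots\times[1]$: the position set $[i+2,n]$ has $n-i-1$ elements, so $a_i\in\{0,\ldots,n-1-i\}$, and each value is attained independently by reconstructing $w$ from its code in Lehmer-code fashion, the fixed entry $w_1=1$ anchoring the cyclic intervals. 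Injectivity on representatives then forces the fibers of $w\mapsto\eta(w)$ to be exactly the $\chi$-orbits, each of size $n$, completing the proof.
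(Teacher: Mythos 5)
First, a point of calibration: the paper does not prove this statement at all. It is the ``Monomial factor'' conjecture, stated conditionally on Conjecture \ref{C:main}(1) and supported only by the data tables for $n\le 5$, so your argument has to stand entirely on its own. Much of your framework does stand. The Markov chain tree theorem is an algebraic identity, so the arborescence-sum vector $Z$ spans the nullspace of Proposition \ref{P:null} and $\zeta(w)=\zeta(w_0)Z(w)/Z(w_0)$; since every off-diagonal entry of $P$ is a single variable, $Z(w)$ has nonnegative integer coefficients, there is no cancellation, and $v_i(Z(w))=\mu_i(w)$ is indeed the minimum number of $x_i$-edges over arborescences rooted at $w$. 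Your observation that each $u$ has at most one out-edge of weight $x_i$ (the swap of $c=u_i$ with its cyclic predecessor $c^-$, available exactly when $c^-$ lies to the right of position $i$) is correct and is the right structural explanation for why cyclic intervals should enter. The $n$-to-$1$ statement is also essentially within reach by your method: $\chi$-orbits have size exactly $n$, both sides have cardinality $(n-1)!$, and the code is invertible greedily --- with $w_1=1$ fixed, $a_i$ forces $w_{i+1}$ to be the $(a_i+1)$-st unused value in cyclic order after $w_i$ --- so $w\mapsto(a_1,\ldots,a_{n-2})$ bijects these representatives onto $[n-2]\times\cdots\times[1]$. Your anchor computations $v_i(\zeta(w_0))=\binom{n-i}{2}=\sum_{j\ge i}a_j(w_0)$ also check out.

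However, the step you yourself flag as ``the main obstacle'' is not a technical loose end; it is the entire content of the conjecture, and you do not prove it. The claim that $\mu_i(w)-\sum_{j\ge i}a_j(w)$ is independent of $w$ requires an explicit arborescence rooted at $w$ with exactly the claimed number of $x_i$-edges, together with a matching lower bound. Your upper-bound recipe (``route every vertex toward $w$ through non-$x_i$ moves wherever possible'') defines no arborescence: on a digraph with $n!$ vertices one must specify the out-edge of every non-root vertex and prove the result is acyclic, and whether a vertex is ``genuinely forced'' to use its $x_i$-edge depends globally on the routing, not locally. Likewise, the Bellman recurrence you derive is a correct necessary condition, but the dual certificate is never produced: the natural potential $\phi(u)=\sum_{j\ge i}a_j(u)$ would have to be shown compatible with both move types (adjacent swaps and the wrap move $u\mapsto(1,n)u$), and that verification is precisely the conjecture restated. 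So what you have is a genuine and potentially useful reformulation --- the conjecture is equivalent to a min-cost-arborescence identity on the transition digraph --- but not a proof; the statement remains open after your argument, exactly as it is in the paper.
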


\begin{conjecture}[Special value]\
$$\zeta({\rm id}) = \Schub_{123 \cdots n} \Schub_{1n23 \cdots (n-1)} \Schub_{1(n-1) n 23\cdots (n-2)} \cdots \Schub_{134 \cdots n2}$$

\end{conjecture}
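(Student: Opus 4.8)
We outline a strategy for proving the Special value conjecture. \textbf{Step 1 (identify the factors).} Each permutation $w^{(k)} = 1,(n-k+1),(n-k+2),\ldots,n,2,3,\ldots,(n-k)$ appearing in the product (for $k=0,1,\ldots,n-2$) is Grassmannian with its unique descent at position $k+1$; computing the associated partition shows it is the rectangle $((n-1-k)^k)$, so that
$$\Schub_{w^{(k)}} = s_{((n-1-k)^k)}(x_1,\ldots,x_{k+1}),$$
the Schur polynomial of a $k\times(n-1-k)$ rectangle in $k+1$ variables. In particular $\Schub_{w^{(0)}}=1$, the product is homogeneous of degree $\sum_{k=1}^{n-2} k(n-1-k) = \binom{n}{3}$, and this matches $\deg \zeta(w_0) = \sum_{i=1}^{n-2}\binom{n-i}{2} = \binom{n}{3}$ coming from the normalization. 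This is consistent with (and should be proved alongside) the homogeneity of $\zeta$, and recasts the target as the symmetric-function identity $\zeta(\mathrm{id}) = \prod_{k=1}^{n-2} s_{((n-1-k)^k)}(x_1,\ldots,x_{k+1})$.

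\textbf{Step 2 (anchor via the balance equations).} Writing out the stationarity $\sum_w p_{w,v}\zeta(w)=\zeta(v)$ at the two distinguished states gives clean relations. At $v=\mathrm{id}$ the only incoming transitions are the down-moves from the $s_i$, whence
$$\sum_{i=1}^{n-1} x_i\,\zeta(s_i) = x_1\,\zeta(\mathrm{id}),$$
while at $v=w_0$ the only incoming transition is the single up-move from $u:=(1,n)w_0 = 1,(n-1),(n-2),\ldots,2,n$, giving $x_1\,\zeta(u) = (x_1+\cdots+x_{n-1})\,\zeta(w_0)$. These furnish exact boundary anchors, and already settle $n=3$, where $u=\mathrm{id}$. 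I do not expect a naive induction on $n$ to close the argument: the rectangles $((n-1-k)^k)$ all widen when $n$ grows, so the products for $S_{n-1}$ and $S_n$ do not telescope.

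\textbf{Step 3 (compute the whole vector, then specialize).} The cleanest route is to recognize $P$ as the generator of the inhomogeneous multispecies TASEP on a ring of $n$ sites — left multiplication swaps adjacent \emph{values}, and the cyclic symmetry of Proposition \ref{P:shift} is rotation of the ring — and to build a combinatorial model for the \emph{entire} stationary distribution: a matrix product ansatz, or equivalently Ferrari--Martin multiline queues adapted to the species-dependent rates $x_i$, expressing each $\zeta(w)$ as a weighted sum over queues. Specializing to $w=\mathrm{id}$, one then shows the contributing queues decouple line by line, the $k$-th line producing exactly the factor $s_{((n-1-k)^k)}(x_1,\ldots,x_{k+1})$. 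A self-contained alternative is the Markov Chain Tree Theorem: since the nullspace is one-dimensional (Proposition \ref{P:null}) and $\zeta(w_0)$ is a monomial, $\zeta(\mathrm{id})=\zeta(w_0)\,A(\mathrm{id})/A(w_0)$, where $A(v)$ is the generating function of in-arborescences rooted at $v$ weighted by the edge-probabilities $p_{a,b}$; the claim becomes a product formula for this ratio, attacked by a sign-reversing involution collapsing $A(\mathrm{id})$ to a determinant and evaluating it via Lindström--Gessel--Viennot / Jacobi--Trudi as the product of rectangular Schur functions.

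\textbf{Main obstacle.} In either route the crux is the \emph{factorization}: turning a single large combinatorial sum (multiline queues for $\mathrm{id}$, or arborescences rooted at $\mathrm{id}$) into the product $\prod_k s_{((n-1-k)^k)}$. The boundary anchors of Step 2 together with the $\mathbb{Z}/n$-symmetry of Proposition \ref{P:shift} reduce the bookkeeping and supply consistency checks, but establishing the clean line-by-line (equivalently, row-by-row determinantal) decoupling is where the real work lies.
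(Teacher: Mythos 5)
First, a point of calibration: the paper does not prove this statement. It is one of the paper's open conjectures, supported only by the data in Section 4 (the $n=3$ figure and the $n=4,5$ tables), so there is no paper proof to compare against; the only question is whether your proposal itself constitutes a proof. It does not, although its preliminary steps are correct and worth keeping. Your Step 1 identification is right: each factor is Grassmannian with unique descent at position $k+1$ and shape a $k\times(n-1-k)$ rectangle, so the conjecture is equivalent to the identity $\zeta(\mathrm{id})=\prod_{k=1}^{n-2}s_{((n-1-k)^k)}(x_1,\ldots,x_{k+1})$, and both sides have degree $\binom{n}{3}$ (granting homogeneity of $\zeta$, which does need the small graded-kernel argument you allude to). Your Step 2 balance equations are also correct: the only transitions into $\mathrm{id}$ are from the $s_i$ with rate $x_i$, and the holding probability at $\mathrm{id}$ is $1-x_1$, giving $\sum_{i=1}^{n-1}x_i\zeta(s_i)=x_1\zeta(\mathrm{id})$; similarly $x_1\zeta((1,n)w_0)=(x_1+\cdots+x_{n-1})\zeta(w_0)$, and since $(1,3)w_0=\mathrm{id}$ when $n=3$ this recovers $\zeta(\mathrm{id})=x_1+x_2$, matching Figure \ref{fig:S3}.

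The genuine gap is that Step 3 --- which is where all of the content of the conjecture lives --- is a menu of strategies, not an argument. You never construct the multiline-queue or matrix-product formula for the stationary vector of this particular chain (whose rates depend on the data $w^{-1}(i+1)$, so any Ferrari--Martin-type construction must first be adapted to these inhomogeneous rates and then \emph{proved} to be stationary, a substantial task in itself), and you never exhibit the claimed line-by-line decoupling at $w=\mathrm{id}$. In the tree-theorem alternative, the sign-reversing involution and the determinant to be evaluated by Lindstr\"om--Gessel--Viennot are likewise not produced; note moreover that the tree theorem only expresses $\zeta(\mathrm{id})/\zeta(w_0)$ as a ratio $A(\mathrm{id})/A(w_0)$ of enormous positive sums, so even polynomiality (Conjecture \ref{C:main}(1)), let alone the product formula, does not follow from it without the cancellation you would need to establish. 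The consistency checks of Steps 1--2 cannot substitute for this: they are two linear relations involving the further unknowns $\zeta(s_i)$ and $\zeta((1,n)w_0)$ among the $n!$ balance equations, and for $n\geq 4$ they do not determine $\zeta(\mathrm{id})$. What you have is a sensible research program --- the inhomogeneous-TASEP-on-a-ring point of view is indeed the natural lens, and your reformulation as a product of rectangular Schur polynomials is the right target --- together with a verification for $n=3$, but not a proof of the conjecture.
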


\begin{conjecture}[Special Schubert factors]
Consider the letters of $w\in S_n$ in (cyclic) order.  
If there is an adjacent string 
of letters $1,2$, then $\zeta(w)$ is a multiple of the Schubert polynomial
$\Schub_{1 3 4 5 \dots n 2}$.
More generally, if there is an adjacent string 
of letters $1,2,3,\dots,k$, then $\zeta(w)$ is a multiple of the Schubert polynomial
$\Schub_{1(k+1)(k+2)\dots n 2 3 \dots k}$.

\end{conjecture}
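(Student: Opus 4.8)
The plan is to turn the statement into an explicit divisibility among combinatorial sums and then to extract the Schubert factor by contracting the block. First, some reductions. Computing the Lehmer code of $u_k := 1\,(k+1)\,(k+2)\cdots n\,2\,3\cdots k$ shows that $u_k$ is Grassmannian with its unique descent at position $d=n-k+1$, whence
\[
\Schub_{u_k}=s_{((k-1)^{n-k})}(x_1,\ldots,x_{n-k+1}),
\]
the Schur polynomial of the rectangle with $n-k$ rows of length $k-1$ in the first $n-k+1$ variables; for $k=2$ this is $e_{n-2}(x_1,\ldots,x_{n-1})$. Let $A_k\subseteq S_n$ be the set of $w$ whose one-line word, read cyclically, contains the run $1,2,\ldots,k$. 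I must show $\zeta(w)$ lies in the principal ideal $(\Schub_{u_k})$ for every $w\in A_k$. By Proposition \ref{P:shift}, $\zeta$ is constant on $\langle\chi\rangle$-orbits, so it suffices to prove the divisibility for one representative of each orbit meeting $A_k$, the conclusion for the rest of the orbit (and for its cyclically shifted blocks) being automatic.

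For a concrete handle on $\zeta$ I would use the Markov chain tree theorem. Since the chain is irreducible for positive $x_i$ by \cite[Proposition 1]{L}, this expresses the stationary vector, up to a single scalar $\lambda$ independent of $w$, as
\[
\zeta(w)=\lambda\sum_{T\in\mathcal A_w}\ \prod_{(a\to b)\in T}p_{a,b},
\]
the sum being over spanning trees $T$ of the transition digraph oriented toward $w$. Every off-diagonal $p_{a,b}$ is a single variable $x_j$, so each tree contributes a monomial of the common degree $n!-1$, and the inner sum is a positive integer combination of monomials. Matching the normalization $\zeta(w_0)=x_1^{1+2+\cdots+(n-2)}\cdots x_{n-2}$ forces $\lambda$ to be that monomial divided by the tree-sum at $w_0$; thus Conjecture \ref{C:main}(1),(2) is equivalent to the divisibility of each tree-sum by the tree-sum at $w_0$, up to a monomial, and the present statement becomes: for $w\in A_k$, the tree-sum at $w$ is divisible by $\Schub_{u_k}$ times the tree-sum at $w_0$, again up to a monomial. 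Everything is now a concrete, if large, combinatorial identity.

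The obstacle to a naive argument is that $A_k$ is not closed under the chain — the affine move $w\mapsto(1,n)w$ pulls the value $1$ off the front of the block — so one cannot simply restrict the balance equations. Instead I would work on the tree side and group the arborescences rooted at $w\in A_k$ according to the image of $T$ under contracting the sorted block $1,2,\ldots,k$ to a single super-letter. The claim to be proven is that, for each fixed contracted tree, the subsum over trees lying above it factors as a monomial times $s_{((k-1)^{n-k})}$. I would try to realize this factor through a tableau model: an RSK- or Lindström--Gessel--Viennot-type bijection sending the internal routings of the block (the ways the $k$ sorted values are transported past the remaining $n-k$ letters) to semistandard fillings of the $(n-k)\times(k-1)$ rectangle, so that the generating function of routings is exactly the rectangular Schur polynomial. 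A purely algebraic alternative is to seek a divided-difference recursion on the family $\{\zeta(w)\}$ compatible with $\partial_i\Schub_u=\Schub_{us_i}$, which would let one peel off the Grassmannian factor inductively.

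The main obstacle is precisely this block factorization, and it is genuinely global: because the cyclic generator couples value $1$, inside the block, with value $n$, outside it, the arborescences do not split cleanly along the block boundary, and it is not clear a priori that the internal contribution assembles into a single rectangular Schur polynomial rather than some messier symmetric function. I would therefore first settle $k=2$, where the target factor is $e_{n-2}(x_1,\ldots,x_{n-1})$ and the contracted chain is small; the computation for $S_3$, where $\zeta(123)=x_1+x_2=\Schub_{132}$, already exhibits the mechanism and should guide the correct formulation of the splitting for general $k$. Finally, since divisibility is only meaningful once polynomiality is known, I expect the tree-sum analysis to have to deliver Conjecture \ref{C:main}(1) and the Schubert factor together rather than in sequence.
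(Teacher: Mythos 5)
The statement you are addressing is one of the paper's \emph{conjectures}: the paper offers no proof of it at all, only the verification tables for $n\le 5$. So there is no paper proof to compare your approach against; the only question is whether your proposal itself establishes the claim, and it does not. To give credit where due, your preliminary reductions are correct: $u_k=1\,(k+1)\,(k+2)\cdots n\,2\,3\cdots k$ is indeed Grassmannian with unique descent at position $n-k+1$, so $\Schub_{u_k}=s_{((k-1)^{n-k})}(x_1,\ldots,x_{n-k+1})$, which for $k=2$ is $e_{n-2}(x_1,\ldots,x_{n-1})$ (consistent with $\Schub_{132}=x_1+x_2$ and the $S_3$ data, and with $\Schub_{1342}=ab+ac+bc$ in the $n=4$ table); the Markov chain tree theorem does express $\zeta(w)$, after matching the normalization at $w_0$, as a monomial times the arborescence sum at $w$ divided by the arborescence sum at $w_0$; and reducing modulo the $\chi$-action via Proposition \ref{P:shift} is legitimate.

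However, the heart of the argument---that after contracting the cyclic block $1,2,\ldots,k$ the subsum of arborescences in each fiber factors as a monomial times the rectangular Schur polynomial $s_{((k-1)^{n-k})}$---is precisely the content of the conjecture, and you do not prove it. You say you \emph{would try} to realize this factor through an RSK- or Lindstr\"om--Gessel--Viennot-type bijection, and then candidly note that the arborescences ``do not split cleanly along the block boundary'' because the move $w\mapsto(1,n)w$ couples the value $1$ (inside the block) to the value $n$ (outside it). That admission is the gap: no bijection is constructed, no divided-difference recursion compatible with $\partial_i$ is exhibited, and no argument is given that the fiber sums are even symmetric polynomials in any set of variables, let alone equal to a monomial times a single rectangular Schur polynomial. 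There is also a structural circularity you acknowledge but do not resolve: divisibility of $\zeta(w)$ by $\Schub_{u_k}$ in the polynomial ring presupposes Conjecture \ref{C:main}(1) (polynomiality of $\zeta(w)$), which is itself open, so your plan must deliver both simultaneously from the tree-sum identity---something it defers rather than does. In short, this is a reasonable research program whose first steps are sound, but it is not a proof, and the statement remains open.
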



\section{Data}
We provide experimental data supporting these conjectures.

\subsection{$n=3$}
See Figure \ref{fig:S3}.
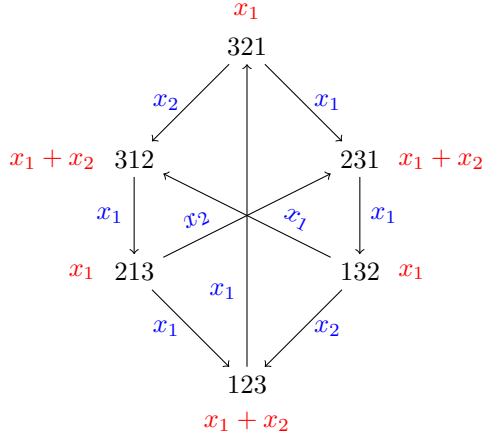
\begin{figure}[h]
\begin{center}

\begin{tikzpicture}[scale=1.5]
\node (0) [label={[red]below: $x_1+x_2$}] at (0,0) {$123$};
\node (12)  [label={[red]right: $x_1+x_2$}] at (1,2) {$231$};
\node (21)  [label={[red]left: $x_1+x_2$}] at (-1,2) {$312$};
\node (1) [label={[red]left: $x_1$}] at (-1,1) {$213$};
\node (2) [label={[red]right: $x_1$}] at (1,1) {$132$};
\node (121) [label={[red]above: $x_1$}] at (0,3) {$321$};

\draw[->] (0) -- node[near start,left=0.5pt,blue]{$x_1$} (121);
\draw[->](1)--node[near start,sloped, above=0.5pt,blue]{$x_2$}(12);
\draw[->](2)--node[near start,sloped, above=0.5pt,blue]{$x_1$} (21);
\draw[->](121)--node[left=0.5pt,blue]{$x_2$}(21);
\draw[->](121)--node[right=0.5pt,blue]{$x_1$}(12);
\draw[->](21)--node[left=0.5pt,blue]{$x_1$}(1);
\draw[->](12)--node[right=0.5pt,blue]{$x_1$}(2);
\draw[->](1) -- node[left=0.5pt,blue]{$x_1$}(0);
\draw[->](2)--node[right=0.5pt,blue]{$x_2$}(0);
\end{tikzpicture}
\caption{The transition matrix on $S_3$ (in blue) with the transitions from a vertex to itself removed and the normalized stationary distribution $\zeta$ (in red).}
\label{fig:S3}
\end{center}
\end{figure}
\subsection{$n=4$}

Using Proposition \ref{P:shift}, we need only provide data for permutations $w$ where $w_1= n$.  In the following we use $a = x_1$, $b = x_2$,  and $c = x_3$.  
We also write the answers as products of Schubert polynomials.  
Since a product of Schubert polynomials is also a nonnegative linear combination of Schubert polynomials this supports Conjecture \ref{C:main}(3).

\begin{center}
\begin{tabular}{|c|c|c|}
\hline
$w$ & $\zeta(w)$ & \\
\hline
4123&$(a^2+ab+b^2)(ab+ac+bc) $&$\Schub_{1423} \Schub_{1342}$\\
4132&$(a^2+ab+b^2)ab$&$\Schub_{1423}\Schub_{231}$\\
4213&$(a+b+c)a^2b$&$\Schub_{1243} \Schub_{321} $\\
4231&$(a^2b+a^2c+ab^2+abc+b^2c)a$&$\Schub_{1432}\Schub_{21}$\\
4312&$(ab+ac+bc)a^2$&$\Schub_{1342}\Schub_{312}$\\
4321&$a^3 b$& $\Schub_{4213}$ \\
\hline
\end{tabular}
\end{center}

Note that $a^2b+a^2c+ab^2+abc+b^2c$ is the only non-trivial factor which is not a symmetric polynomial.  

\subsection{$n=5$}
For $n=5$ we write our answers as products and sums of Schubert polynomials, multiplied
by the monomial factor $\eta(w)$.

\begin{center}
\begin{tabular}{|c|c|}
\hline
$w$ & $\zeta(w)$  \\
\hline
51234&$\Schub_{15234}\Schub_{14523}\Schub_{13452}$ \\
51243&$\Schub_{15234}\Schub_{14523}abc$\\
51324 & $\Schub_{15234}\Schub_{12453}a^2b^2c$\\
51342&$\Schub_{15234}\Schub_{14532}ab$\\
51423&$\Schub_{15234}\Schub_{13452}a^2b^2$\\
51432&$\Schub_{15234}a^3b^3c$\\
52134&$\Schub_{12534}\Schub_{13452}a^3b^2$\\
52143&$\Schub_{12534}a^4b^3c$\\
52314&$(\Schub_{15432}+\Schub_{164235})a^2bc$\\
52341&$(\Schub_{1753246}+\Schub_{265314}+\Schub_{2743156}+\Schub_{356214}+\Schub_{364215}+\Schub_{365124})a$\\
52413&$(\Schub_{164325}+\Schub_{25431})a^2b$\\
52431&$\Schub_{15243}a^3b^2c$\\
53124&$(\Schub_{146325}+\Schub_{24531})a^3b$\\
53142&$\Schub_{12543}a^4b^2c$\\
53214&$\Schub_{12354}a^5b^3c$\\
53241&$\Schub_{13542}a^4b^2$\\
53412&$\Schub_{15423}\Schub_{13452}a^2$\\
53421&$\Schub_{15423}a^3bc$\\
54123&$\Schub_{14523}\Schub_{13452}a^3$\\
54132&$\Schub_{14523}a^4bc $\\
54213&$\Schub_{12453}a^5b^2c$\\
54231&$\Schub_{14532}a^4b$\\
54312&$\Schub_{13452}a^5b^2$\\
54321&$a^6 b^3 c$\\
\hline
\end{tabular}
\end{center}


\begin{thebibliography}{xx}
\bibitem[Lam]{L} T.~Lam, The shape of a random affine Weyl group element and  random core partitions, preprint, 2011.
\end{thebibliography}
\end{document}